\documentclass[11pt]{amsart}
\usepackage[T1]{fontenc}
\usepackage[utf8]{inputenc}
\usepackage{amsmath,amssymb,amsthm}
\usepackage{enumitem}
\usepackage[final,expansion=false]{microtype}
\usepackage{hyperref}
\hypersetup{colorlinks=true,linkcolor=blue,citecolor=blue,urlcolor=blue}

\newtheorem{theorem}{Theorem}[section]
\newtheorem{lemma}[theorem]{Lemma}
\newtheorem{proposition}[theorem]{Proposition}
\newtheorem{corollary}[theorem]{Corollary}
\theoremstyle{definition}
\newtheorem{definition}[theorem]{Definition}
\newtheorem{question}[theorem]{Question}
\theoremstyle{remark}

\newcommand{\Katetov}{Kat\v{e}tov}
\newcommand{\conv}{\mathsf{conv}}
\newcommand{\Conv}{\mathsf{Conv}}
\newcommand{\Kle}{\le_K}

\title[Dense-set dependence in the \Katetov\ order]
{Dense-set dependence in the \Katetov\ order for uncountable coordinate ideals}

\author{Xing-Yu Hu, Zhang-Yi Luo}
\address{\begin{tabular}[t]{@{}l@{}}School of Mathematics and Statistics, Hanjiang Normal University\\ Shiyan, Hubei, China\end{tabular}}
\email{huxingyu@hjnu.com; lzy20050813@qq.com}

\subjclass[2020]{Primary 03E17, 54A20; Secondary 03E05, 03E15}
\keywords{\Katetov\ order, critical ideal, coordinate ideal, countable dense set, countable compact ordinal space, Continuum Hypothesis}
\date{\today}

\begin{document}

\begin{abstract}
For each countable ordinal \(\alpha\ge 2\), Filip\'ow, Kowalczuk and Kwela introduced an ideal \(\conv_\alpha\) on the countable compact ordinal space \(\omega^\alpha+1\). Kowalczuk later proved that, for each countable limit ordinal \(\lambda\), the ideal \(\conv_{<\lambda}\) is the greatest lower bound of \(\{\conv_\beta:\beta<\lambda\}\) in the \Katetov\ order.

At the first uncountable level, let \(A\subseteq[2,\omega_1)\) be uncountable and let \(D\) be a countable dense subset of \(X_A=\prod_{\alpha\in A}(\omega^\alpha+1)\). The coordinate ideal \(\Conv(A,D)\) on \(D\) consists of those \(B\subseteq D\) with \(\pi_\alpha[B]\in\conv_\alpha\) for every \(\alpha\in A\). For a pair \(D\subseteq E\) of countable dense sets, call \(\alpha\) \emph{non-small} if \(\pi_\alpha[E\setminus D]\notin\conv_\alpha\). In ZFC, if at most countably many coordinates are non-small, then \(\Conv(A,D)\equiv_K\Conv(A,E)\). Under CH this countability bound is sharp: for every \(A\subseteq[3,\omega_1)\) with \(|A|=\aleph_1\), there are countable dense sets \(D\subseteq D^*\subseteq X_A\) such that
\[
        \Conv(A,D^*)\le_K\Conv(A,D)
        \quad\text{but}\quad
        \Conv(A,D)\not\le_K\Conv(A,D^*),
\]
and in particular \(\Conv(A,D)\) and \(\Conv(A,D^*)\) are not \Katetov\ equivalent. The non-reduction is obtained, under CH, by diagonalizing along \(\omega_1\) coordinates against the elements of \(\omega^\omega\) that code retractions \(D^*\to D\).
\end{abstract}

\maketitle

\section{Introduction}

The ideals \(\conv_\alpha\), for countable \(\alpha\ge2\), were introduced by Filip\'ow, Kowalczuk and Kwela as part of their characterization of countable compact spaces homeomorphic to \(\omega^\alpha\cdot n+1\) \cite{FilipowKowalczukKwela2025}; they are defined on the ordinal compacta \(\omega^\alpha+1\) (see Section~\ref{sec:prelim} for the precise definition). The ordinal models used here go back to the classical analysis of countable compact spaces by Mazurkiewicz and Sierpi\'nski \cite{MazurkiewiczSierpinski1920}. For background on ideals, definable ideals, and the \Katetov\ order we refer to \cite{Katetov1968,Solecki1999,Farah2000,Hrusak2011,Hrusak2017,BarbarskiFilipowMrozekSzuca2013}.

The \Katetov\ order on the family \(\{\conv_\alpha:2\le\alpha<\omega_1\}\) is anti-monotone in the ordinal index: for every \(\alpha<\beta<\omega_1\),
\[
        \conv_\beta\Kle\conv_\alpha,
        \qquad
        \conv_\alpha\not\Kle\conv_\beta;
\]
see \cite{FilipowKowalczukKwela2025,Kowalczuk2026}. For a countable limit ordinal \(\lambda\), the ideal \(\conv_\lambda\) is \emph{not} the greatest lower bound of \(\{\conv_\beta:\beta<\lambda\}\); Kowalczuk introduced an auxiliary ideal \(\conv_{<\lambda}\) and proved that it is the greatest lower bound of this family in the \Katetov\ order \cite[Proposition~3.2]{Kowalczuk2026}.

The first uncountable analogue starts with an uncountable \(A\subseteq[2,\omega_1)\). Set
\[
        X_A=\prod_{\alpha\in A}(\omega^\alpha+1).
\]
The product \(X_A\) is separable by the Hewitt--Marczewski--Pondiczery theorem \cite[Theorem~2.3.15]{Engelking1989}, since \(|A|\le\aleph_1\le 2^{\aleph_0}\). For a countable dense \(D\subseteq X_A\), define
\[
        \Conv(A,D)=
        \{B\subseteq D:\pi_\alpha[B]\in\conv_\alpha
        \text{ for every }\alpha\in A\}.
\]
The definition of \(\Conv(A,D)\) depends on the choice of \(D\). The question is whether this dependence is genuine, namely whether two countable dense sets can give non-\Katetov-equivalent ideals.

This dependence is not a formal nuisance. A countable dense set in the product projects densely to each coordinate, but it also fixes how the countably many chosen points are aligned across the coordinates. The ideal \(\Conv(A,D)\) tests smallness coordinate by coordinate, whereas a \Katetov\ witness is a single map between the underlying countable sets. Thus a change of dense set can be invisible at many individual coordinates and still affect the existence of a global witness.

There is no conflict with the invariance, in topological representations of ideals, under the choice of a countable dense set. In that context, a single \(\sigma\)-ideal on a separable metrizable space gives isomorphic ideals on any two countable dense subsets \cite[Proposition~2.1]{KwelaSabok2015}. In the present construction, by contrast, the space is an uncountable product and smallness is imposed separately at the coordinates by the ideals \(\conv_\alpha\). Thus the comparison made here is one between ideals on countable sets via \Katetov\ reducibility, rather than a representation theorem for a fixed \(\sigma\)-ideal \cite{BarbarskiFilipowMrozekSzuca2013,Hrusak2017}.

There is an elementary asymmetry in this question. If \(D\subseteq E\), then the inclusion of \(D\) into \(E\) always gives
\[
        \Conv(A,E)\Kle\Conv(A,D).
\]
Thus the only possible obstruction is the reverse inequality \(\Conv(A,D)\Kle\Conv(A,E)\). The positive and negative results below both focus on this direction. In the positive result, the new points of \(E\setminus D\) are small at all but countably many coordinates. In the negative result, CH is used to distribute the possible reverse witnesses over \(\omega_1\) coordinates and to defeat them one at a time.

For countable dense sets \(D\subseteq E\subseteq X_A\), call \(\alpha\in A\) a \emph{non-small coordinate} if \(\pi_\alpha[E\setminus D]\notin\conv_\alpha\). A ZFC absorption argument shows that countably many non-small coordinates can always be absorbed:
\[
        \bigl|\{\alpha\in A:\pi_\alpha[E\setminus D]\notin\conv_\alpha\}\bigr|\le\aleph_0
        \quad\Longrightarrow\quad
        \Conv(A,D)\equiv_K\Conv(A,E)
\]
(Proposition~\ref{prop:countably-active-absorbed}). Under CH this bound is sharp.

\begin{theorem}\label{thm:main}
Assume CH. Let \(A\subseteq[3,\omega_1)\) have cardinality \(\aleph_1\). Then there are countable dense sets
\[
        D\subseteq D^*\subseteq X_A
\]
such that
\[
        \Conv(A,D^*)\le_K\Conv(A,D)
        \quad\text{but}\quad
        \Conv(A,D)\not\le_K\Conv(A,D^*).
\]
In particular, \(\Conv(A,D)\) and \(\Conv(A,D^*)\) are not \Katetov\ equivalent.
\end{theorem}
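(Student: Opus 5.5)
The direction \(\Conv(A,D^*)\le_K\Conv(A,D)\) comes for free from the elementary asymmetry noted before the theorem. The inclusion \(\iota\colon D\to D^*\) is a \Katetov\ witness: if \(B\in\Conv(A,D^*)\), then \(\iota^{-1}[B]=B\cap D\) satisfies \(\pi_\alpha[B\cap D]\subseteq\pi_\alpha[B]\in\conv_\alpha\) for every \(\alpha\). All the work therefore goes into constructing \(D\subseteq D^*\) so that no \(f\colon D^*\to D\) witnesses \(\Conv(A,D)\le_K\Conv(A,D^*)\).

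\emph{Setup.} I will first fix a countable dense \(D\subseteq X_A\) in a canonical form. By CH, enumerate \(A=\{\alpha_\xi:\xi<\omega_1\}\), and identify \(D\) and the set \(N=D^*\setminus D\) of new points with \(\omega\). Then every candidate map \(f\colon D^*\to D\) is coded by an element of \(\omega^\omega\). Again by CH, enumerate all such codes as \(\{f_\xi:\xi<\omega_1\}\). I will also fix in advance the combinatorial shape of \(N\): countably many new points \(e_n\), whose coordinates are still to be determined.

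The plan is to assign coordinates coordinatewise, by recursion on \(\xi\). At stage \(\xi\) I choose the values \(\pi_{\alpha_\xi}(e_n)\) for all \(n\), and, if needed, also \(\pi_{\alpha_\xi}\) on \(D\). The goal at this stage is to kill the witness \(f_\xi\).

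To kill \(f_\xi\), it suffices to find \(B\in\Conv(A,D)\) with \(f_\xi^{-1}[B]\notin\Conv(A,D^*)\). Since \(f_\xi\) is fixed at stage \(\xi\), the following choice is natural.
\begin{itemize}[leftmargin=*]
\item If \(f_\xi\restriction N\) is finite-to-one onto an infinite subset, I take \(B\subseteq f_\xi[N]\) to be an infinite set that is already \(\Conv\)-small at the coordinates \(\alpha_\eta\) with \(\eta<\xi\). This is possible after passing to subsequences, diagonalizing over the countably many earlier coordinates; at later coordinates I must preserve smallness. Then I choose \(\pi_{\alpha_\xi}\) on \(f_\xi^{-1}[B]\cap N\) so that its image is \(\conv_{\alpha_\xi}\)-positive. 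This uses \(\alpha_\xi\ge3\): there are positive sets that avoid a given ``thin'' set, for instance sequences converging to distinct points of high rank.
\item If instead \(f_\xi\) collapses, I use \(D\)-points. A finite preimage is harmless, and a constant value is handled by compactness arguments.
\end{itemize}

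To keep \(B\) in \(\Conv(A,D)\) at the later coordinates \(\alpha_\zeta\) with \(\zeta>\xi\), I will maintain a countable list of ``promises'': sets \(B_\eta\subseteq D\) that must stay small. At each later stage, the values on \(D\) are chosen so that all countably many promised sets, being infinite, project into a single convergent sequence. Such a set lies in \(\conv_\alpha\), while density of \(D\) is preserved using a dense complement.

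The main obstacle is the tension between density of \(D\) in the product and the requirement that the promised sets \(B_\eta\) stay \(\conv\)-small at all later coordinates. Only countably many promises exist at each stage, but they accumulate to \(\aleph_1\). I would try to handle this by splitting \(D=D_0\sqcup D_1\). Here \(D_0\) is a fixed dense set whose coordinates are chosen from a countable dense set in the standard Hewitt--Marczewski--Pondiczery way, via finitely many coordinate constraints. The promised sets are drawn from \(D_1\), whose coordinates at each later \(\alpha\) are sent into a convergent sequence; this gives a \(\conv_\alpha\) set, possibly after a fusion or almost-disjoint argument to cover all earlier \(B_\eta\) simultaneously.

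I also need \(f_\xi^{-1}[B]\cap N\) to be infinite with positive projection, and \(D^*\) must remain dense; the latter is automatic since \(D\subseteq D^*\). Finally I check that every \(f\) appears as some \(f_\xi\), so every potential witness is defeated. This contradicts \(\Conv(A,D)\le_K\Conv(A,D^*)\), and non-equivalence follows.
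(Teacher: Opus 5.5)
Your architecture is the paper's. Under CH you list all codes, fix the \(\alpha_\xi\)-coordinates of every point at stage \(\xi\), and kill the \(\xi\)-th map with a set \(B\subseteq D\) that is small at every coordinate, while the freely chosen \(\alpha_\xi\)-coordinates of the new points make its preimage positive. The collapsing case needs no compactness: if \(f_\xi\) has an infinite fibre on \(N\), a singleton \(B\) works. The easy direction and the diagonal-subsequence treatment of the earlier coordinates are also fine.

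The gap is exactly the step you call the main obstacle, and the mechanism you propose for it cannot work.
\begin{enumerate}[label=\textup{(\arabic*)}]
\item Suppose that at each later coordinate \(\alpha_\zeta\) all earlier promised sets are sent into one convergent sequence. Then density of \(\pi_{\alpha_\zeta}[D]\) must be carried by \(T_\zeta=D\setminus\bigcup_{\eta<\zeta}B_\eta\).
\item The sets \(T_\zeta\) decrease along \(\omega_1\) inside a countable set. Hence they stabilize, from some countable stage on, at \(T_\infty=D\setminus\bigcup_{\eta<\omega_1}B_\eta\). Density at later coordinates forces \(T_\infty\) to be infinite, and by definition it is disjoint from every promised set.
\item There are \(2^{\aleph_0}\) codes mapping \(N\) bijectively onto \(T_\infty\), so one of them is \(f_\xi\) for some \(\xi\) past the stabilization point.
\item For that \(\xi\), your \(B_\xi\subseteq f_\xi[N]=T_\infty\) is infinite, yet as a promised set it must miss \(T_\infty\). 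This is a contradiction.
\end{enumerate}
The split \(D=D_0\sqcup D_1\) fails for the same reason. A code may map \(N\) into \(D_0\), so \(B\) must be drawn from \(D_0\). But you fixed the coordinates of \(D_0\) in advance, so even smallness of \(B\) at \(\alpha_\xi\) itself is then out of your control. Similarly, you never say how \(B\) is made small at the diagonal coordinate without destroying density there.

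The missing ingredient is the bookkeeping of Proposition~\ref{prop:trace-controlled-selection}: earlier promises are made small individually, not jointly.
\begin{enumerate}[label=\textup{(\alph*)}]
\item At a later coordinate \(\beta_\eta\), list the earlier promises as \(J_{\rho(0)},J_{\rho(1)},\dots\). The \(n\)-th dense value \(q^\eta_n\) is given only to points outside \(J_{\rho(0)}\cup\dots\cup J_{\rho(n-1)}\). Then \(h_\eta[J_{\rho(k)}\cap S_\eta]\subseteq\{0,\dots,k\}\) is finite.
\item This only needs finite unions of promises to be co-infinite, and the paper keeps exactly that as an invariant. Every cell \(C(F,G,s)\) must be infinite; such a cell is a finite pattern of earlier values minus a finite union of earlier promises.
\item At the diagonal stage, the new promise \(J_\xi\) sits inside a reservoir \(R_\xi\) sent to \(0\) at \(\beta_\xi\), which gives smallness at the diagonal coordinate.
\item The reservoir \(R_\xi\) is chosen by Lemma~\ref{lem:reservoir-splitting} so that \(\varphi_\xi^{-1}[R_\xi]\) is infinite while every old cell minus \(R_\xi\) stays infinite. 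This lets the invariant pass to \(\xi+1\) and yields density via reservoir-HMP independence.
\end{enumerate}
Your ``fusion or almost-disjoint argument'' points in this direction. Without such an invariant, however, your recursion has no way to keep \(D\) dense while keeping \(\aleph_1\) promised sets small at all later coordinates.
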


The restriction to \(A\subseteq[3,\omega_1)\) is required by the auxiliary sets \(L_\beta\subseteq\omega^\beta+1\) of Section~\ref{sec:L-beta}, whose definition uses ordinals of the form \(\omega^2\cdot n+\omega(k+1)\) and therefore requires \(\beta\ge3\); the case \(\alpha=2\) is not covered by our argument.

This theorem is not a solution of the greatest-lower-bound problem at \(\omega_1\). It shows instead that a natural uncountable-coordinate construction is not canonical unless the dense set is part of the data. The point is that, beyond the countable limit levels treated by \(\conv_{<\lambda}\), the product construction introduces a second parameter, namely the countable dense subset of the product. The theorem says that this parameter cannot be suppressed in general.

The obstruction in Theorem~\ref{thm:main} comes from coding retractions \(D^*\to D\) by elements of \(\omega^\omega\). Once \(D^*\setminus D\) and \(D\) are enumerated, each such retraction determines a code. Under CH these codes can be assigned to the coordinates of \(A\). At the coordinate assigned to a code, the construction produces a test set \(B(J_\xi)\) which is small for \(\Conv(A,D)\) but whose inverse image under the corresponding retraction is not small for \(\Conv(A,D^*)\). The reservoir-selection construction supplies the off-diagonal smallness needed for this test.

Theorem~\ref{thm:main} therefore shows that the ideal \(\Conv(A,D)\) may depend, up to \Katetov\ equivalence, on the chosen dense set \(D\). The greatest-lower-bound problem for \(\{\conv_\alpha:2\le\alpha<\omega_1\}\) remains open as Question~\ref{q:GLB}.

The positive result rules out counterexamples with only countably many active coordinates. If the added points are non-small at only countably many coordinates, the reverse witness can be built by matching finitely many of those coordinates at a time. The example under CH prevents this finite matching argument from being extended to \(\omega_1\) many active coordinates: every possible code for a retraction is assigned a coordinate at which its inverse image of a small test set becomes non-small.

\section{Preliminaries}\label{sec:prelim}

We use standard terminology from general topology as in \cite{Engelking1989}, and standard terminology for ideals and the \Katetov\ order as in \cite{Katetov1968,Hrusak2011,Hrusak2017}. All ordinal spaces carry the order topology, and all products carry the product topology.

If \(\mathcal I\) is an ideal on \(X\) and \(\mathcal J\) is an ideal on \(Y\), we write
\[
        \mathcal I\Kle\mathcal J
\]
if there is a map \(f\colon Y\to X\) such that \(B\in\mathcal I\) implies \(f^{-1}[B]\in\mathcal J\). We write \(\mathcal I\equiv_K\mathcal J\) if \(\mathcal I\Kle\mathcal J\) and \(\mathcal J\Kle\mathcal I\).

Let \(X\) be a topological space and let \(D\subseteq X\) be countable. The ideal \(\conv_X(D)\) on \(D\) consists of all subsets of \(D\) contained in a finite union of ranges of sequences from \(D\) that converge in \(X\). When \(X=\omega^\alpha+1\) (with the order topology) and \(D=X\), we write
\[
        \conv_\alpha=\conv_{\omega^\alpha+1}(\omega^\alpha+1).
\]

\begin{lemma}\label{lem:conv-derived-set}
For every countable \(\alpha\ge2\),
\[
        \conv_\alpha=\{B\subseteq\omega^\alpha+1:B^d\text{ is finite}\},
\]
where \(B^d\) denotes the derived set of \(B\) in \(\omega^\alpha+1\).
\end{lemma}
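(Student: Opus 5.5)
We prove the two inclusions separately. Throughout, \(X=\omega^\alpha+1\) is a countable compact Hausdorff space, hence first countable and metrizable. Consequently every point of the derived set \(B^d\) is the limit of a sequence of distinct points of \(B\). Conversely, a convergent sequence has at most one accumulation point, namely its limit.

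For \(\subseteq\), let \(B\in\conv_\alpha\). Then \(B\subseteq\bigcup_{i<n}\operatorname{ran}(s_i)\) for sequences \(s_i\) in \(X\) converging to points \(x_i\). Write \(R_i=\operatorname{ran}(s_i)\). Each set \(R_i\cup\{x_i\}\) is compact, hence closed. Any accumulation point of \(R_i\) must lie in this closed set. The points of \(R_i\) other than \(x_i\) are not accumulation points of \(R_i\): each such point \(y\) has a neighbourhood missing \(x_i\), and that neighbourhood contains only finitely many terms of \(s_i\). Hence \(R_i^d\subseteq\{x_i\}\). Since the derived set of a finite union is the union of the derived sets, and derived sets are monotone, we get
\[
B^d\subseteq\bigcup_{i<n}R_i^d\subseteq\{x_0,\dots,x_{n-1}\},
\]
which is finite.

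For \(\supseteq\), suppose \(B^d=\{x_0,\dots,x_{n-1}\}\) is finite. Since \(X\) is metrizable, fix a compatible metric \(\rho\). Partition \(B\setminus B^d\) by assigning each point \(y\) to some \(x_i\) at minimal \(\rho\)-distance, so that \(\rho(y,x_i)=\rho(y,B^d)\). This gives sets \(B_0,\dots,B_{n-1}\).

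The key step is to show that for every \(\varepsilon>0\) only finitely many points of \(B\) satisfy \(\rho(y,B^d)\ge\varepsilon\). Otherwise these infinitely many points would, by compactness of \(X\), have an accumulation point \(z\). Then \(z\in B^d\), yet \(\rho(z,B^d)\ge\varepsilon\) by continuity of \(\rho(\cdot,B^d)\), a contradiction.

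It follows that each infinite \(B_i\) can be enumerated as a sequence converging to \(x_i\), using the ordering by distance in countably many finite stages. Each finite \(B_i\) is the range of an eventually constant, hence convergent, sequence. Finally, \(B\cap B^d\) is finite and is covered by constant sequences. So \(B\) is covered by finitely many convergent sequences from \(X\), and \(B\in\conv_\alpha\).

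If \(B^d=\emptyset\), then \(B\) is finite by compactness, and the same covering works. I expect the only step needing care to be the compactness argument that points of \(B\) far from \(B^d\) are finitely many.
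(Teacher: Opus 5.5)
Your proof is correct and follows essentially the same route as the paper. The inclusion \(\subseteq\) rests on the fact that the range of a convergent sequence has at most one accumulation point. For \(\supseteq\), both arguments use compactness to show that only finitely many points of \(B\) stay away from the finitely many derived points, and then distribute the remaining points among those derived points.

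The only real difference is bookkeeping. You split \(B\setminus B^d\) by nearest derived point under a metric, whereas the paper uses neighbourhoods \(U_i\ni x_i\) with pairwise disjoint closures. One small correction in your \(\subseteq\) step: the neighbourhood of \(y\) should be disjoint from some neighbourhood of \(x_i\), not merely miss \(x_i\). Only then is it guaranteed to contain finitely many terms of \(s_i\).
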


\begin{proof}
The space \(\omega^\alpha+1\) is compact and metrizable. If \(B\) is contained in a finite union of ranges of sequences converging in \(\omega^\alpha+1\), then each such range has at most one accumulation point, so \(B^d\) is finite. Conversely, suppose \(B^d=\{x_1,\dots,x_r\}\) is finite. Choose open neighbourhoods \(U_i\ni x_i\) whose closures are pairwise disjoint. The set \(B\setminus\bigcup_{i\le r}U_i\) has no accumulation point in the compact space \(\omega^\alpha+1\), hence is finite. For each \(i\), every accumulation point of \(B\cap U_i\) lies in \(B^d\cap\overline{U_i}=\{x_i\}\). Hence every neighbourhood of \(x_i\) contains all but finitely many points of \(B\cap U_i\). Thus, if \(B\cap U_i\) is infinite, any one-to-one enumeration of it converges to \(x_i\); if it is finite, it is already a finite union of ranges of constant sequences. Thus \(B\) is a finite union of ranges of convergent sequences together with a finite set, whence \(B\in\conv_\alpha\).
\end{proof}

Lemma~\ref{lem:conv-derived-set} is a standard equivalence for countable compact metric ordinal spaces; see \cite[Section~3]{FilipowKowalczukKwela2025}. Below we use this derived-set description of \(\conv_\alpha\). Thus every finite set, and the range of every convergent sequence, belongs to \(\conv_\alpha\), and adjoining finitely many points to a member of \(\conv_\alpha\) leaves it in the ideal.

For a countable limit ordinal \(\lambda\), let
\[
        \conv_{<\lambda}=\{B\subseteq\omega^\lambda+1:
        B\cap\omega^\beta\in\conv_\beta\text{ for every }\beta<\lambda\}.
\]
Kowalczuk proved that \(\conv_{<\lambda}\) is the greatest lower bound of \(\{\conv_\beta:\beta<\lambda\}\) in the \Katetov\ order \cite[Proposition~3.2]{Kowalczuk2026}.

For the lower-bound problem, the particular unbounded set \(A\) is not essential. By the anti-monotonicity \(\alpha<\beta\Rightarrow\conv_\beta\Kle\conv_\alpha\), the transitivity of \(\Kle\), and Kowalczuk's theorem quoted above, every non-empty \(A\subseteq[2,\omega_1)\) falls into one of three cases. If \(A\) has maximum \(\lambda\), then \(\conv_\lambda\) is the greatest lower bound of \(\{\conv_\alpha:\alpha\in A\}\). If \(\lambda=\sup A<\omega_1\) and \(A\) has no maximum, then \(\lambda\) is a countable limit ordinal and \(\conv_{<\lambda}\) is the greatest lower bound of \(\{\conv_\alpha:\alpha\in A\}\). If \(A\) is unbounded in \(\omega_1\), then \(\{\conv_\alpha:\alpha\in A\}\) and \(\{\conv_\alpha:2\le\alpha<\omega_1\}\) have the same lower bounds in the \Katetov\ order. Since every uncountable \(A\subseteq[2,\omega_1)\) is unbounded in \(\omega_1\), the last case applies to the sets \(A\) used in Theorem~\ref{thm:main}.

Fix an uncountable \(A\subseteq[3,\omega_1)\). For every countable dense \(D\subseteq X_A\), the set
\[
        \Conv(A,D)=\{B\subseteq D:\pi_\alpha[B]\in\conv_\alpha
        \text{ for every }\alpha\in A\}
\]
is a proper ideal on \(D\): heredity and closure under finite unions follow coordinatewise from the corresponding properties of each \(\conv_\alpha\), and properness follows from the fact that, for every \(\alpha\in A\), the projection \(\pi_\alpha[D]\) is dense in \(\omega^\alpha+1\) (hence has infinite derived set) and therefore does not belong to \(\conv_\alpha\).

Throughout the paper, comparisons between such ideals are made inside the same ambient product \(X_A\). The dense-set parameter is therefore the only object that changes. This convention is used in the absorption result, where \(D\) is enlarged by a countable set, and in the CH construction, where \(D^*\) is obtained from \(D\) by adding a second countable family of points.

\section{ZFC stability for countable non-small support}\label{sec:zfc-stability}

We first prove a ZFC stability fact for enlargements whose non-small support is countable. In this case the new points can be pushed back into the old dense set by a \Katetov\ witness. The proof is stated for an arbitrary countable set \(E\), not only for a dense one, because only the added part \(E\setminus D\) is used.

The countability assumption is used in a finite-support way. After the non-small coordinates are enumerated, the point of \(D\) chosen to replace the \(n\)-th new point is required to match only the first finitely many active coordinates. For each fixed coordinate, all but finitely many new points are then controlled by the corresponding safe map; the remaining finitely many points are harmless for \(\conv_\alpha\). This is the reason the argument stops exactly at countable non-small support.

Let \(\beta\ge2\), and let \(P\subseteq\omega^\beta+1\). A map \(\theta\colon P\to\omega^\beta+1\) is called \(\conv_\beta\)-\emph{safe} if
\[
        S\in\conv_\beta
        \quad\Longrightarrow\quad
        \theta^{-1}[S]\in\conv_\beta
\]
for every \(S\subseteq\omega^\beta+1\).

\begin{lemma}
\label{lem:safe-approximation-isolated}
Let \(\beta\ge2\) be a countable ordinal. Let \(P\subseteq\omega^\beta+1\) be countable, and let \(Q\) be a dense subset of \(\omega^\beta+1\) consisting of isolated points. Then there is a \(\conv_\beta\)-safe map \(\theta\colon P\to Q\).
\end{lemma}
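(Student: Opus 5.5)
The plan is to build \(\theta\) as a small perturbation of the identity that is finite-to-one, and then verify safety through the derived-set description of \(\conv_\beta\) in Lemma~\ref{lem:conv-derived-set}. Fix a compatible metric \(d\) on the compact metrizable space \(\omega^\beta+1\) and enumerate \(P=\{p_n:n\in\omega\}\) without repetition. (If \(P\) is finite, every subset of \(P\) is finite, and any map \(P\to Q\) is safe.)

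The map is defined by recursion on \(n\), in two cases.
\begin{itemize}
\item If \(p_n\) is an isolated point of \(\omega^\beta+1\), then \(p_n\in Q\), because \(Q\) is dense and \(\{p_n\}\) is open. Put \(\theta(p_n)=p_n\).
\item If \(p_n\) is not isolated, the ball \(B(p_n,2^{-n})\) is an infinite open set. By density, \(Q\) meets every nonempty open subset of it, so \(Q\cap B(p_n,2^{-n})\) is infinite. Choose \(\theta(p_n)\) in this set, different from the finitely many values \(\theta(p_m)\), \(m<n\), already assigned to non-isolated points.
\end{itemize}
Then \(\theta\) maps into \(Q\) and \(d(\theta(p_n),p_n)<2^{-n}\) for every \(n\). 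Moreover each fibre \(\theta^{-1}[\{q\}]\) has at most two points: at most one non-isolated \(p_n\), and possibly \(q\) itself. So \(\theta\) is finite-to-one.

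For safety, let \(S\in\conv_\beta\), so \(S^d\) is finite, and put \(T=\theta^{-1}[S]\subseteq P\). I claim \(T^d\subseteq S^d\), which gives \(T\in\conv_\beta\) by Lemma~\ref{lem:conv-derived-set}.
\begin{itemize}
\item Let \(x\in T^d\). By metrizability there is a one-to-one sequence \(p_{n_k}\in T\) converging to \(x\), and we may assume \(n_k\to\infty\).
\item Then \(d(\theta(p_{n_k}),x)\le 2^{-n_k}+d(p_{n_k},x)\to0\), so \(\theta(p_{n_k})\to x\).
\item Since \(\theta\) is finite-to-one and the \(p_{n_k}\) are distinct, the points \(\theta(p_{n_k})\in S\) take infinitely many values.
\item Hence \(x\) is an accumulation point of \(S\), that is, \(x\in S^d\).
\end{itemize}

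The only delicate step is making sure the perturbation stays finite-to-one while still tending to the identity along the enumeration. A collapsing map, such as sending a whole sequence to one point, would fail safety. A perturbation whose distance to the identity does not go to \(0\) would let \(\theta\) move accumulation points, so \(T^d\) would no longer be controlled by \(S^d\). The recursive choice above, with injectivity on non-isolated points and radius \(2^{-n}\), handles both issues.
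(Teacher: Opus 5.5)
Your proof is correct and is essentially the paper's argument: the same recursive finite-to-one perturbation of the identity within distance \(2^{-n}\) (identity on isolated points, injective on non-isolated ones), followed by the same derived-set comparison. The only cosmetic difference is that you prove \((\theta^{-1}[S])^d\subseteq S^d\) directly, whereas the paper shows \(T^d\subseteq(\theta[T])^d\) and concludes by contraposition using \(\theta[T]\subseteq S\).
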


\begin{proof}
If \(P\) is finite (in particular if \(P=\varnothing\)), any map \(P\to Q\) is \(\conv_\beta\)-safe because finite sets are \(\conv_\beta\)-small. Assume \(P\) is infinite, and fix a compatible metric \(d\) on \(\omega^\beta+1\). Enumerate \(P\) without repetitions as \(\{p_n:n<\omega\}\), and choose \(\theta(p_n)\in Q\) recursively so that
\[
        d(\theta(p_n),p_n)<2^{-n}.
\]
If \(p_n\) is isolated, then \(\{p_n\}\) is a non-empty open set, so by density \(p_n\in Q\); set \(\theta(p_n)=p_n\). If \(p_n\) is non-isolated, choose \(\theta(p_n)\in Q\) inside the open ball of radius \(2^{-n}\) around \(p_n\), avoiding all points of \(Q\) chosen at earlier non-isolated stages. This is possible because \(Q\cap U\) is infinite for every neighbourhood \(U\) of such a point. Indeed, after deleting finitely many earlier \(Q\)-choices, the remaining open set still contains the non-isolated point and therefore meets \(Q\) by density.

Each fibre of \(\theta\) is finite, since any \(q\in Q\) is chosen at most once at a non-isolated stage and at most once at an isolated stage. We verify safety by contraposition: let \(T\subseteq P\) with \(T\notin\conv_\beta\). By Lemma~\ref{lem:conv-derived-set}, \(T^d\) is infinite. For \(\lambda\in T^d\), choose distinct \(p_{n_k}\in T\) with \(p_{n_k}\to\lambda\); passing to a subsequence, we may assume \(n_k\to\infty\). Then
\[
        d(\theta(p_{n_k}),\lambda)\le d(\theta(p_{n_k}),p_{n_k})+d(p_{n_k},\lambda)\to 0,
\]
so \(\theta(p_{n_k})\to\lambda\). Since the fibres of \(\theta\) are finite, a further subsequence has pairwise distinct \(\theta(p_{n_k})\), so \(\lambda\in(\theta[T])^d\). Hence \(T^d\subseteq(\theta[T])^d\), and \(\theta[T]\notin\conv_\beta\).

Now let \(S\in\conv_\beta\) and set \(T=\theta^{-1}[S]\). Since \(\theta[T]\subseteq S\in\conv_\beta\), we have \(\theta[T]\in\conv_\beta\); by the contrapositive just proved, \(T\in\conv_\beta\). Thus \(\theta\) is \(\conv_\beta\)-safe.
\end{proof}

\begin{proposition}
\label{prop:countably-active-absorbed}
Let \(A\subseteq[2,\omega_1)\) be non-empty, let \(D\) be a countable dense subset of \(X_A\), and let \(E\subseteq X_A\) be countable. Put
\[
        C(E,D)=\{\alpha\in A:\pi_\alpha[E\setminus D]\notin\conv_\alpha\}.
\]
If \(C(E,D)\) is countable, then \(\Conv(A,D)\equiv_K\Conv(A,D\cup E)\).
\end{proposition}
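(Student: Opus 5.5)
We prove the two Katětov reductions separately and write \(E'=D\cup E\) and \(N=E\setminus D\).

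\emph{Easy direction.} The inclusion \(\iota\colon D\to E'\) witnesses \(\Conv(A,E')\Kle\Conv(A,D)\). For \(B\in\Conv(A,E')\) we have \(\iota^{-1}[B]=B\cap D\). Each projection \(\pi_\alpha[B\cap D]\) is a subset of \(\pi_\alpha[B]\in\conv_\alpha\), and \(\conv_\alpha\) is hereditary, so \(B\cap D\in\Conv(A,D)\).

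\emph{Main direction.} We build \(f\colon E'\to D\) with \(f\restriction D=\mathrm{id}_D\) such that \(f^{-1}[B]\in\Conv(A,E')\) for every \(B\in\Conv(A,D)\). For such \(B\),
\[
f^{-1}[B]=B\cup\bigl(f^{-1}[B]\cap N\bigr),
\]
so it suffices to check that \(\pi_\alpha[f^{-1}[B]\cap N]\in\conv_\alpha\) for every \(\alpha\in A\).

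\emph{Coordinates outside \(C(E,D)\).} For \(\alpha\notin C(E,D)\) this is automatic, whatever \(f\) is. Indeed, \(\pi_\alpha[f^{-1}[B]\cap N]\subseteq\pi_\alpha[N]\), and \(\pi_\alpha[N]\in\conv_\alpha\) by the definition of \(C(E,D)\). So only the countably many coordinates in \(C(E,D)\) need care. If \(C(E,D)\) is empty, any retraction \(f\) works. If \(N\) is finite, then \(C(E,D)=\varnothing\) because finite sets are small, so we may assume \(N\) is infinite.

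\emph{Choosing \(f\).} Enumerate \(C(E,D)=\{\alpha_k:k<\omega\}\), with repetitions allowed if \(C(E,D)\) is finite, and enumerate \(N=\{e_n:n<\omega\}\) without repetitions. For each \(k\), let \(Q_k\) be the set of isolated points of \(\omega^{\alpha_k}+1\). This set is dense, so Lemma~\ref{lem:safe-approximation-isolated} gives a \(\conv_{\alpha_k}\)-safe map
\[
\theta_k\colon\pi_{\alpha_k}[N]\to Q_k.
\]
Now define \(f(e_n)\in D\) so that
\[
\pi_{\alpha_k}(f(e_n))=\theta_k(\pi_{\alpha_k}(e_n))\quad\text{for all }k<n.
\]
Such a point exists by density of \(D\). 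Each target value is an isolated point of its coordinate space, so the set
\[
\{x\in X_A:\pi_{\alpha_k}(x)=\theta_k(\pi_{\alpha_k}(e_n))\text{ for all }k<n\}
\]
is a basic open set. It is non-empty: when a coordinate is repeated as \(\alpha_k=\alpha_{k'}\), the two constraints agree because \(\theta_k=\theta_{k'}\) may be taken equal. Hence \(D\) meets it.

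\emph{Verification.} Fix \(\alpha=\alpha_k\in C(E,D)\) and \(B\in\Conv(A,D)\). For every \(n>k\) with \(f(e_n)\in B\),
\[
\theta_k(\pi_\alpha(e_n))=\pi_\alpha(f(e_n))\in\pi_\alpha[B].
\]
Therefore
\[
\pi_\alpha\bigl[\{e_n:n>k,\ f(e_n)\in B\}\bigr]\subseteq\theta_k^{-1}\bigl[\pi_\alpha[B]\bigr],
\]
and the right-hand side lies in \(\conv_\alpha\) by safety of \(\theta_k\), since \(\pi_\alpha[B]\in\conv_\alpha\). The remaining points \(e_n\) with \(n\le k\) are finitely many, and adding finitely many points keeps a set in \(\conv_\alpha\). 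Taking the union with \(\pi_\alpha[B]\) shows \(\pi_\alpha[f^{-1}[B]]\in\conv_\alpha\). This holds for every \(\alpha\in A\), so \(f^{-1}[B]\in\Conv(A,E')\).

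\emph{Main obstacle.} The crux is that \(f\) must be one map serving all the countably many active coordinates at once. We handle this with the finite-support trick: at stage \(n\) only the first \(n\) coordinates are matched, and each fixed coordinate is then controlled from some stage on. The construction works because Lemma~\ref{lem:safe-approximation-isolated} lets us aim at isolated points, so the finitely many matching constraints define an open set that the dense set \(D\) must hit.
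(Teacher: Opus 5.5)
Your proof is correct and follows essentially the same route as the paper. The easy direction is the inclusion. For the reverse direction you build a retraction sending \(e_n\) to a point of \(D\) that matches \(\theta_{\alpha_k}(e_n(\alpha_k))\) at the finitely many active coordinates with \(k<n\), using \(\conv_\alpha\)-safe maps into isolated points from Lemma~\ref{lem:safe-approximation-isolated}.

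The only differences are cosmetic. You fold the finite and infinite cases of \(C(E,D)\) into one enumeration with repetitions, choosing \(\theta\) per coordinate so that repeated constraints agree. You also dispose of a finite \(E\setminus D\) by noting that then \(C(E,D)=\varnothing\). The paper treats these cases separately.
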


\begin{proof}
Put \(E_0=E\setminus D\) and \(D^*=D\cup E\). The inequality \(\Conv(A,D^*)\Kle\Conv(A,D)\) is witnessed by the inclusion \(D\hookrightarrow D^*\): for \(B\in\Conv(A,D^*)\) and every \(\alpha\in A\), \(\pi_\alpha[B\cap D]\subseteq\pi_\alpha[B]\in\conv_\alpha\), so \(B\cap D\in\Conv(A,D)\) by heredity of each \(\conv_\alpha\). It remains to construct a witness for \(\Conv(A,D)\Kle\Conv(A,D^*)\).

If \(E_0\) is finite, fix \(d_0\in D\) and define \(r\colon D^*\to D\) by \(r|_D=\mathrm{id}_D\) and \(r(x)=d_0\) for \(x\in E_0\). For \(B\in\Conv(A,D)\) the set \(r^{-1}[B]\) is contained in \(B\cup E_0\), and adjoining a finite set to a \(\conv_\alpha\)-small set keeps it small; hence \(r\) is a \Katetov\ witness.

Assume from now on that \(E_0=\{e_n:n<\omega\}\) is enumerated without repetitions. For each \(\alpha\in C(E,D)\), apply Lemma~\ref{lem:safe-approximation-isolated} to \(P_\alpha=\pi_\alpha[E_0]\) and to the set of isolated points of \(\omega^\alpha+1\) (which is dense, since the isolated points of any countable ordinal space are dense). This produces a \(\conv_\alpha\)-safe map
\[
        \theta_\alpha\colon P_\alpha\to\omega^\alpha+1
\]
whose range consists of isolated points.

If \(C(E,D)\) is finite, enumerate it as \(\{\alpha_0,\ldots,\alpha_{M-1}\}\), and for each \(n<\omega\) use the density of \(D\) to choose \(d_n\in D\) with
\[
        d_n(\alpha_m)=\theta_{\alpha_m}(e_n(\alpha_m))
        \qquad(m<M).
\]
This is possible because each prescribed value is an isolated point of \(\omega^{\alpha_m}+1\), so the prescription cuts out a non-empty basic open subset of \(X_A\). If \(C(E,D)\) is countably infinite, enumerate it as \(\{\alpha_m:m<\omega\}\) and, by the same argument, choose \(d_n\in D\) with
\[
        d_n(\alpha_m)=\theta_{\alpha_m}(e_n(\alpha_m))
        \qquad(m\le n).
\]

Define \(r\colon D^*\to D\) by \(r|_D=\mathrm{id}_D\) and \(r(e_n)=d_n\). We check that \(r\) witnesses \(\Conv(A,D)\Kle\Conv(A,D^*)\). Let \(B\in\Conv(A,D)\), and fix \(\alpha\in A\). If \(\alpha\notin C(E,D)\), then
\[
        \pi_\alpha[r^{-1}[B]]\subseteq\pi_\alpha[B]\cup\pi_\alpha[E_0]\in\conv_\alpha,
\]
since both terms are \(\conv_\alpha\)-small. If \(\alpha\in C(E,D)\) and \(C(E,D)\) is finite, then any \(e_n\in E_0\cap r^{-1}[B]\) satisfies
\[
        e_n(\alpha)\in\theta_\alpha^{-1}[\pi_\alpha[B]]\in\conv_\alpha
\]
by safety of \(\theta_\alpha\), while \(r^{-1}[B]\cap D=D\cap B\) contributes only \(\pi_\alpha[B]\in\conv_\alpha\). If \(\alpha=\alpha_m\) and \(C(E,D)\) is countably infinite, the same conclusion holds for all \(e_n\) with \(n\ge m\), while the indices \(n<m\) contribute only the finite set \(\{e_n(\alpha):n<m\}\). In every case \(\pi_\alpha[r^{-1}[B]]\in\conv_\alpha\); since \(\alpha\) was arbitrary, \(r^{-1}[B]\in\Conv(A,D^*)\).
\end{proof}

\begin{corollary}
\label{cor:zfc-countable-barrier}
Let \(A\subseteq[2,\omega_1)\) be non-empty, and let \(D_0,D_1\) be countable dense subsets of \(X_A\). If both sets
\[
        \{\alpha\in A:\pi_\alpha[D_1\setminus D_0]\notin\conv_\alpha\},
        \qquad
        \{\alpha\in A:\pi_\alpha[D_0\setminus D_1]\notin\conv_\alpha\}
\]
are countable, then \(\Conv(A,D_0)\equiv_K\Conv(A,D_1)\). Consequently, any dense-set dependence example must have uncountably many non-small coordinates in at least one of the two directions \(D_1\setminus D_0\) and \(D_0\setminus D_1\).
\end{corollary}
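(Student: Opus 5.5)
The plan is to reduce the corollary to Proposition~\ref{prop:countably-active-absorbed} by passing through the union \(D_0\cup D_1\). Set \(D^*=D_0\cup D_1\), which is a countable dense subset of \(X_A\).

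First apply Proposition~\ref{prop:countably-active-absorbed} with \(D=D_0\) and \(E=D_1\). Here \(E\setminus D=D_1\setminus D_0\), so \(C(D_1,D_0)\) is exactly the first set in the hypothesis. It is countable, so the proposition yields
\[
\Conv(A,D_0)\equiv_K\Conv(A,D_0\cup D_1).
\]
Next apply the same proposition with the roles reversed, taking \(D=D_1\) (also a countable dense subset of \(X_A\)) and \(E=D_0\). Now \(E\setminus D=D_0\setminus D_1\), and the countability of the second set gives
\[
\Conv(A,D_1)\equiv_K\Conv(A,D_0\cup D_1).
\]
Since \(\Kle\) is transitive (compose the witnessing maps), \(\equiv_K\) is an equivalence relation. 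Chaining the two equivalences therefore gives \(\Conv(A,D_0)\equiv_K\Conv(A,D_1)\).

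The ``consequently'' clause is just the contrapositive. If \(\Conv(A,D_0)\not\equiv_K\Conv(A,D_1)\), then the two sets cannot both be countable, so at least one of them is uncountable.

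There is no real obstacle here. The only points to check are that Proposition~\ref{prop:countably-active-absorbed} allows an arbitrary countable \(E\) with dense \(D\), and that both \(D_0\) and \(D_1\) are dense, so each may serve as the base set \(D\).
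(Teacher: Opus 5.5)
Your proposal is correct and is the paper's argument: you apply Proposition~\ref{prop:countably-active-absorbed} once with base \(D_0\) and once with base \(D_1\), both times landing at \(D_0\cup D_1\). You then chain the two equivalences by transitivity of \(\Kle\), and you read the ``consequently'' clause as the contrapositive, exactly as intended.
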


\begin{proof}
Apply Proposition~\ref{prop:countably-active-absorbed} first to \(D_0\subseteq D_0\cup D_1\) and then to \(D_1\subseteq D_0\cup D_1\).
\end{proof}

\section{The sets \texorpdfstring{\(L_\beta\)}{L beta}}\label{sec:L-beta}

For \(\beta\ge3\), define
\[
        L_\beta=
        \{\omega^2\cdot n+\omega\cdot(k+1):1\le n<\omega,\ k<\omega\}.
\]
The underlying set is independent of \(\beta\). The subscript specifies the ambient ordinal compactum \(\omega^\beta+1\), and hence the topology and the ideal involved.

The set \(L_\beta\) is chosen so that it is not \(\conv_\beta\)-small while its complement remains dense. Thus the added points of \(D^*\setminus D\) can be placed on \(L_\beta\), while the old dense set \(D\) uses points outside \(L_\beta\) at the same coordinate.

The formula for \(L_\beta\) gives these two properties uniformly for all \(\beta\ge3\). The points of \(L_\beta\) are non-isolated, and the first parameter in \(\omega^2\cdot n+\omega(k+1)\) gives infinitely many distinct limit points. Hence \(L_\beta\) is non-small, while the isolated points, which are disjoint from \(L_\beta\), keep the complement dense enough for the later construction of \(D\).

\begin{lemma}\label{lem:L-beta}
For every countable \(\beta\ge3\), the set \(L_\beta\) is countable, consists of non-isolated points of \(\omega^\beta+1\), has dense complement in \(\omega^\beta+1\), and does not belong to \(\conv_\beta\).
\end{lemma}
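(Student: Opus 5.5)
The four properties are elementary facts about ordinals, and I will check them one at a time, using Lemma~\ref{lem:conv-derived-set} for the last one.

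\emph{Countability and membership.} Since \(\beta\ge3\), every ordinal \(\omega^2\cdot n+\omega\cdot(k+1)\) with \(1\le n<\omega\) and \(k<\omega\) is below \(\omega^3\le\omega^\beta\), so \(L_\beta\subseteq\omega^\beta+1\). It is indexed by \(\omega\times\omega\), hence countable.

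\emph{Non-isolated points.} Each element \(\omega^2\cdot n+\omega\cdot(k+1)=(\omega^2\cdot n+\omega\cdot k)+\omega\) is a limit ordinal. In the order topology a limit ordinal is the supremum of the ordinals below it, so every basic neighbourhood \((\gamma,\delta]\) of it contains infinitely many smaller ordinals. Hence it is not isolated.

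\emph{Dense complement.} The successor ordinals, together with \(0\), are exactly the isolated points of \(\omega^\beta+1\). They are dense: every non-empty open set contains a basic set \((\gamma,\delta]\) with \(\gamma<\delta\), and this contains the successor \(\gamma+1\). By the previous step \(L_\beta\) consists of limit ordinals, so it is disjoint from the isolated points. Therefore the complement of \(L_\beta\) contains a dense set and is dense.

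\emph{\(L_\beta\notin\conv_\beta\).} By Lemma~\ref{lem:conv-derived-set} it suffices to show that \(L_\beta^d\) is infinite. I claim that \(\omega^2\cdot(n+1)\in L_\beta^d\) for every \(n\ge1\). Fix \(n\ge1\). The ordinals \(\omega^2\cdot n+\omega\cdot(k+1)\), for \(k<\omega\), are pairwise distinct, strictly increasing, and lie in \(L_\beta\). Their supremum is \(\omega^2\cdot n+\omega^2=\omega^2\cdot(n+1)\). Let \((\gamma,\omega^2\cdot(n+1)]\) be a basic neighbourhood, so \(\gamma<\omega^2\cdot(n+1)\). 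Then \(\gamma\) lies below some term of the sequence, and all later terms lie in the neighbourhood. So the sequence converges to \(\omega^2\cdot(n+1)\), which is therefore an accumulation point of \(L_\beta\). These points are distinct for distinct \(n\) and all lie in \(\omega^\beta+1\), because \(\omega^2\cdot(n+1)<\omega^3\le\omega^\beta\). Hence \(L_\beta^d\) is infinite.

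The only step needing care is this last one, namely the ordinal arithmetic identity \(\sup_k(\omega^2\cdot n+\omega\cdot(k+1))=\omega^2\cdot(n+1)\). It holds because ordinal addition is continuous in the right argument and \(\sup_k\omega\cdot(k+1)=\omega^2\).
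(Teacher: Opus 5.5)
Your proposal is correct and follows the paper's proof step by step: the bound \(\omega^2\cdot n+\omega\cdot(k+1)<\omega^3\le\omega^\beta\), the fact that limit ordinals are non-isolated, the density of the isolated points (which avoid \(L_\beta\)), and the sequences converging to the infinitely many distinct points \(\omega^2\cdot(n+1)\), combined with Lemma~\ref{lem:conv-derived-set}. You also fill in details the paper leaves implicit, namely the supremum identity and why isolated points are dense; in the latter, the open set \(\{0\}\) contains no interval \((\gamma,\delta]\), but it is trivially handled because \(0\) is itself isolated.
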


\begin{proof}
Every point of \(L_\beta\) is an ordinal of the form \(\omega^2\cdot n+\omega\cdot(k+1)<\omega^3\le\omega^\beta\), so \(L_\beta\subseteq\omega^\beta+1\). Each such point is a limit ordinal (because \(\omega\cdot(k+1)\) is a limit ordinal) and is therefore non-isolated in \(\omega^\beta+1\). The isolated points are dense in every countable ordinal space and are disjoint from \(L_\beta\), so \(L_\beta\) has dense complement. For each \(n\ge1\), the sequence \(\{\omega^2\cdot n+\omega\cdot(k+1):k<\omega\}\) converges to \(\omega^2\cdot(n+1)\), so the derived set of \(L_\beta\) contains the infinite set \(\{\omega^2\cdot(n+1):1\le n<\omega\}\). By Lemma~\ref{lem:conv-derived-set}, \(L_\beta\notin\conv_\beta\).
\end{proof}

\begin{lemma}\label{lem:reservoir-splitting}
Let \(\mathcal K\) be a countable family of infinite subsets of \(\omega\), and let \(\varphi\colon\omega\to\omega\). There is \(R\subseteq\omega\) such that \(\varphi^{-1}[R]\) is infinite and \(K\setminus R\) is infinite for every \(K\in\mathcal K\).
\end{lemma}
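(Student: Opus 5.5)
The plan is to split on whether \(\varphi\) has an infinite fibre. In the degenerate case \(R\) is a singleton. In the main case \(R\) is an infinite subset of the range of \(\varphi\), built by a finite-stage diagonalization that keeps infinitely many points of every \(K\in\mathcal K\) out of \(R\).

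\emph{Case 1: some fibre is infinite.} Suppose \(\varphi^{-1}[\{v\}]\) is infinite for some \(v<\omega\). Put \(R=\{v\}\). Then \(\varphi^{-1}[R]\) is infinite. For each \(K\in\mathcal K\), the set \(K\setminus R\) is \(K\) with at most one point removed, so it is still infinite.

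\emph{Case 2: all fibres are finite.} Then \(\operatorname{ran}\varphi\) is infinite. This covers every remaining situation, since a map \(\omega\to\omega\) with finite range has an infinite fibre. It now suffices to find an infinite \(R\subseteq\operatorname{ran}\varphi\) with \(K\setminus R\) infinite for every \(K\in\mathcal K\). Indeed, \(\varphi^{-1}[R]\) then contains a nonempty fibre over each of infinitely many distinct values, and these fibres are pairwise disjoint, so \(\varphi^{-1}[R]\) is infinite.

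If \(\mathcal K\) is empty, take \(R=\operatorname{ran}\varphi\). Otherwise, enumerate \(\mathcal K\) as a sequence \(\langle K_m:m<\omega\rangle\) in which every member of \(\mathcal K\) occurs infinitely often; this is possible since \(\mathcal K\) is countable and nonempty. By recursion on \(m\) we choose two sequences: points \(r_m\), which will form \(R\), and points \(x_m\), which will be kept out of \(R\). At stage \(m\), let \(R_m=\{r_i:i<m\}\) and \(F_m=\{x_i:i<m\}\); both are finite.
\begin{enumerate}[label=(\roman*)]
\item Choose \(x_m\in K_m\setminus(R_m\cup F_m)\). This is possible because \(K_m\) is infinite and \(R_m\cup F_m\) is finite.
\item Choose \(r_m\in\operatorname{ran}\varphi\setminus(R_m\cup F_m\cup\{x_m\})\). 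This is possible because \(\operatorname{ran}\varphi\) is infinite and the excluded set is finite.
\end{enumerate}
Finally set \(R=\{r_m:m<\omega\}\).

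It remains to check the two required properties. The \(r_m\) are pairwise distinct, so \(R\) is an infinite subset of \(\operatorname{ran}\varphi\). The \(x_m\) are also pairwise distinct. No \(x_m\) lies in \(R\): when \(x_m\) is chosen it avoids the earlier points \(r_i\) (\(i<m\)), and every later \(r_j\) (\(j\ge m\)) is chosen to avoid \(x_m\). Now fix \(K\in\mathcal K\). It equals \(K_m\) for infinitely many \(m\), and each such \(m\) contributes a distinct point \(x_m\in K\setminus R\). Hence \(K\setminus R\) is infinite.

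No step here is a genuine obstacle. The only point needing care is the interleaving in the recursion: the kept-out points \(x_m\) must be forbidden to all later choices of \(r_j\), and each \(K\) must be revisited infinitely often in the enumeration.
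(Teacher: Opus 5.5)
Your proof is correct and follows essentially the same route as the paper: a trivial singleton case, then a recursion along an enumeration of \(\mathcal K\) with each member repeated infinitely often, interleaving points kept out of \(R\) with points of \(\varphi[\omega]\) put into \(R\). The only difference is a small streamlining. You take \(R=\{r_m:m<\omega\}\), the thin set of selected points, rather than the paper's \(R=\varphi[\omega]\setminus\{b_i:i<\omega\}\). As a result your kept-out points \(x_m\) may lie anywhere in \(K_m\), and you avoid the paper's auxiliary family \(\mathcal H\) of infinite traces \(K\cap\varphi[\omega]\) and its sub-cases.
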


\begin{proof}
Put \(V=\varphi[\omega]\). If \(V\) is finite, then since \(\omega=\bigcup_{j\in V}\varphi^{-1}(\{j\})\), some \(j\in V\) has infinite preimage; set \(R=\{j\}\). Then \(\varphi^{-1}[R]\) is infinite, and \(K\setminus R=K\setminus\{j\}\) is infinite for every infinite \(K\in\mathcal K\).

Assume \(V\) is infinite. Let
\[
        \mathcal H=\{K\cap V:K\in\mathcal K\text{ and }K\cap V
        \text{ is infinite}\}.
\]
If \(\mathcal H=\varnothing\), then \(K\cap V\) is finite for every \(K\in\mathcal K\), hence \(K\setminus V\) is infinite; take any infinite \(R\subseteq V\), so that \(K\setminus R\supseteq K\setminus V\) is infinite, and \(\varphi^{-1}[R]\) is infinite because \(R\subseteq V=\varphi[\omega]\) is infinite.

Otherwise, fix a surjection \(i\mapsto H_i\) from \(\omega\) onto \(\mathcal H\) such that every member of \(\mathcal H\) equals \(H_i\) for infinitely many \(i\); this is possible since \(\mathcal H\) is non-empty and at most countable. Recursively choose distinct \(a_i,b_i\in H_i\) avoiding all previously chosen points; this is possible since \(H_i\) is infinite and only finitely many points have been chosen at each stage. Put every \(a_i\) into \(R\) and every remaining point of \(V\) into \(R\), while keeping every \(b_i\) and every point outside \(V\) out of \(R\); thus \(R\subseteq V\) and \(R=V\setminus\{b_i:i<\omega\}\). Then \(R\) contains all \(a_i\), hence \(R\) is infinite and \(\varphi^{-1}[R]\) is infinite. For \(K\in\mathcal K\), either \(K\cap V\) is finite, in which case \(K\setminus R\supseteq K\setminus V\) is infinite; or \(K\cap V=H\) for some \(H\in\mathcal H\), in which case \(K\setminus R\supseteq\{b_i:H_i=H\}\), which is infinite because \(H=H_i\) for infinitely many \(i\) and the \(b_i\) are pairwise distinct.
\end{proof}

\begin{lemma}\label{lem:positive-indexing}
Let \(\beta\ge3\), and let \(M\subseteq\omega\) be infinite. There is an enumeration \(L_\beta=\{\ell_m:m<\omega\}\) without repetitions such that \(\{\ell_m:m\in M\}\notin\conv_\beta\).
\end{lemma}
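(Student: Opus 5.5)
We need a bijection \(m\mapsto\ell_m\) from \(\omega\) onto \(L_\beta\) whose restriction to \(M\) lands on a \(\conv_\beta\)-positive subset of \(L_\beta\). The plan is to first pick an infinite subset \(L'\subseteq L_\beta\) with \(L'\notin\conv_\beta\) and \(L_\beta\setminus L'\) infinite. Then we place \(L'\) on the indices in \(M\) and \(L_\beta\setminus L'\) on the indices in \(\omega\setminus M\), adjusting when \(\omega\setminus M\) is finite. Since \(L_\beta\) is countably infinite, any two countably infinite sets can be matched, so the main work is choosing \(L'\) correctly.

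For the choice, take \(L'=\{\omega^2\cdot n+\omega\cdot(2k+1):1\le n<\omega,\ k<\omega\}\), i.e.\ the points with odd \(k+1\). For each \(n\ge1\) the sequence \(\omega^2\cdot n+\omega\cdot(2k+1)\), \(k<\omega\), converges to \(\omega^2\cdot(n+1)\), exactly as in the proof of Lemma~\ref{lem:L-beta}. Hence \((L')^d\supseteq\{\omega^2\cdot(n+1):n\ge1\}\) is infinite, and \(L'\notin\conv_\beta\) by Lemma~\ref{lem:conv-derived-set}. The complement \(L_\beta\setminus L'\) (even \(k+1\)) is also infinite. If we only need the non-smallness conclusion, any \(L'\) with infinite derived set works, and this one is explicit.

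Now split into cases.
\begin{itemize}
\item If \(\omega\setminus M\) is infinite, fix bijections \(M\to L'\) and \((\omega\setminus M)\to L_\beta\setminus L'\). Together they give an enumeration \(\{\ell_m:m<\omega\}\) of \(L_\beta\) without repetitions with \(\{\ell_m:m\in M\}=L'\notin\conv_\beta\).
\item If \(\omega\setminus M\) is finite, say of size \(j\), fix any bijection \(\omega\to L_\beta\). Then \(\{\ell_m:m\in M\}\) is \(L_\beta\) minus \(j\) points. Since adjoining finitely many points to a \(\conv_\beta\)-small set keeps it small, removing finitely many points from the non-small set \(L_\beta\) (Lemma~\ref{lem:L-beta}) leaves a non-small set.
\end{itemize}

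There is no serious obstacle here. The only point requiring care is ensuring both pieces of the split are infinite so that the bijections exist, and handling the cofinite-\(M\) case separately as above.
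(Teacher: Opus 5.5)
Your proof is correct and follows the paper's argument. You split \(L_\beta\) into a \(\conv_\beta\)-positive piece with infinite complement and match that piece with \(M\) when \(\omega\setminus M\) is infinite; when \(M\) is cofinite, you use that \(L_\beta\) minus finitely many points stays outside \(\conv_\beta\). The only differences are cosmetic: you split by the parity of the second parameter \(k+1\) rather than the first parameter \(n\), and you note correctly that only one of the two pieces needs to be positive, whereas the paper observes that both are.
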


\begin{proof}
Let \(L^{\mathrm{ev}}\) and \(L^{\mathrm{od}}\) be the parts of \(L_\beta\) obtained by requiring the first parameter \(n\) to be even and odd, respectively. By the argument of Lemma~\ref{lem:L-beta}, both pieces have infinite derived set, and hence both lie outside \(\conv_\beta\). Thus \(L_\beta=L^{\mathrm{ev}}\sqcup L^{\mathrm{od}}\).

If \(\omega\setminus M\) is infinite, fix a bijection of \(L^{\mathrm{ev}}\) with \(M\) and of \(L^{\mathrm{od}}\) with \(\omega\setminus M\), and let \(\ell_m\) be the corresponding image; then \(\{\ell_m:m\in M\}=L^{\mathrm{ev}}\notin\conv_\beta\). If \(\omega\setminus M\) is finite, choose a finite \(F\subseteq L_\beta\) with \(|F|=|\omega\setminus M|\), enumerate \(F\) along \(\omega\setminus M\) and \(L_\beta\setminus F\) along \(M\); since removing finitely many points from \(L_\beta\) leaves its derived set infinite, \(\{\ell_m:m\in M\}=L_\beta\setminus F\notin\conv_\beta\).
\end{proof}

\begin{definition}\label{def:reservoir-hmp}
Let \(I\) be an index set, and let \(S_i\subseteq\omega\) be infinite for each \(i\in I\). A family of maps \(\{h_i\colon S_i\to\omega:i\in I\}\) is \emph{reservoir-HMP independent} if for every finite \(F\subseteq I\) and every map \(s\colon F\to\omega\), the set
\[
        \Bigl\{j\in\bigcap_{i\in F}S_i:h_i(j)=s(i)\text{ for every }i\in F\Bigr\}
\]
is infinite.
\end{definition}

\section{Reservoir selection along \texorpdfstring{\(\omega_1\)}{omega1}}\label{sec:reservoir-selection}

The construction below produces the sets \(J_\xi\) needed for the diagonal argument. At stage \(\xi\), the set \(J_\xi\) must be large enough, through \(\varphi_\xi^{-1}[J_\xi]\), to give a non-small diagonal trace at the coordinate \(\beta_\xi\). At the same time, it must be small enough relative to all previously constructed coordinates so that later off-diagonal projections remain \(\conv\)-small. The auxiliary cells in the proof keep these two requirements compatible throughout the recursion.

No set-theoretic hypothesis is used in this proposition. The only reason the recursion can be carried out below \(\omega_1\) is that each initial segment \(\xi<\omega_1\) is countable. Thus, at stage \(\xi\), only countably many old finite-pattern requirements have to be preserved, and the compactness argument used to thin \(M_\xi^0\) takes place in a countable product of ordinal compacta.

\begin{proposition}
\label{prop:trace-controlled-selection}
Let \(\kappa\le\omega_1\). Let \(\Phi=\{\varphi_\xi:\xi<\kappa\}\subseteq\omega^\omega\), and let \(\{\beta_\xi:\xi<\kappa\}\) be pairwise distinct ordinals in \([3,\omega_1)\). For each \(\xi<\kappa\), fix a sequence
\[
        \{q_n^\xi:n<\omega\}\subseteq
        (\omega^{\beta_\xi}+1)\setminus L_{\beta_\xi}
\]
whose range is dense in \(\omega^{\beta_\xi}+1\). Then there exist sets
\[
        J_\xi\subseteq R_\xi\subseteq\omega,
        \qquad S_\xi=\omega\setminus R_\xi,
\]
maps \(h_\xi\colon S_\xi\to\omega\), and enumerations without repetitions
\[
        L_{\beta_\xi}=\{\ell_m^\xi:m<\omega\}
\]
satisfying \textup{(i)}--\textup{(iii)}.
\begin{enumerate}[label=\textup{(\roman*)}]
\item \(\{\ell_m^\xi:\varphi_\xi(m)\in J_\xi\}\notin\conv_{\beta_\xi}\) for every \(\xi<\kappa\).
\item The family \(\{h_\xi:\xi<\kappa\}\) is reservoir-HMP independent.
\item For every pair of distinct \(\eta,\xi<\kappa\),
\[
        \{q_n^\eta:n\in h_\eta[J_\xi\cap S_\eta]\}\in\conv_{\beta_\eta}.
\]
\end{enumerate}
\end{proposition}

\begin{proof}
We construct the required objects by recursion on \(\xi<\kappa\), maintaining the auxiliary invariant that whenever \(F,G\subseteq\xi\) are finite and \(s\colon F\to\omega\), the set
\[
        C(F,G,s)=
        \bigcap_{\eta\in F}\bigl(S_\eta\cap h_\eta^{-1}(\{s(\eta)\})\bigr)
        \setminus\bigcup_{\zeta\in G}J_\zeta
\]
is infinite, with the convention that the intersection over \(F=\varnothing\) is \(\omega\) and the union over \(G=\varnothing\) is empty. The case \(G=\varnothing\) of this invariant is the finite-pattern density condition needed for reservoir-HMP independence; the bookkeeping by \(G\) ensures that subsequent removals of sets \(J_\zeta\) still leave each old cell infinite. The invariant is automatic at limit stages \(\xi\): any finite \(F,G\subseteq\xi\) lies in some \(\xi_0<\xi\), and \(C(F,G,s)\) depends only on objects constructed at stages \(\le\xi_0\).

Assume the construction has been carried out below \(\xi\). Since \(\xi<\omega_1\) is countable, the family of finite subsets of \(\xi\) is countable, and hence so is the family
\[
        \mathcal C_\xi=\{C(F,G,s):F,G\subseteq\xi\text{ finite},\ s\colon F\to\omega\}.
\]
By the invariant every member of \(\mathcal C_\xi\) is infinite. Apply Lemma~\ref{lem:reservoir-splitting} to \(\mathcal C_\xi\) and \(\varphi_\xi\) to obtain \(R_\xi^0\subseteq\omega\) such that \(M_\xi^0=\varphi_\xi^{-1}[R_\xi^0]\) is infinite and \(C\setminus R_\xi^0\) is infinite for every \(C\in\mathcal C_\xi\). In particular, \(S_\xi:=\omega\setminus R_\xi^0\) meets every old cell in an infinite set.

We thin \(M_\xi^0\) to control the old coordinates. If \(\xi=0\), let \(M_\xi\) be any infinite subset of \(M_\xi^0\). If \(\xi>0\), the countable product
\[
        P_\xi=\prod_{\eta<\xi}(\omega^{\beta_\eta}+1)
\]
is compact and metrizable. Define a sequence \((x_m)_{m\in M_\xi^0}\) in \(P_\xi\) by
\[
        x_m(\eta)=
        \begin{cases}
        0, & \varphi_\xi(m)\in R_\eta,\\
        q_{h_\eta(\varphi_\xi(m))}^{\eta},
           & \varphi_\xi(m)\in S_\eta,
        \end{cases}
\]
pass to a convergent subsequence, and let \(M_\xi\subseteq M_\xi^0\) be its index set. For every \(\eta<\xi\), the \(\eta\)-coordinate \((x_m(\eta))_{m\in M_\xi}\) converges in \(\omega^{\beta_\eta}+1\), so \(\{x_m(\eta):m\in M_\xi\}\) has at most one limit point in \(\omega^{\beta_\eta}+1\) and therefore belongs to \(\conv_{\beta_\eta}\).

Define
\[
        J_\xi=\varphi_\xi[M_\xi],\qquad R_\xi=R_\xi^0,\qquad S_\xi=\omega\setminus R_\xi.
\]
By Lemma~\ref{lem:positive-indexing}, choose an enumeration \(L_{\beta_\xi}=\{\ell_m^\xi:m<\omega\}\) without repetitions such that \(\{\ell_m^\xi:m\in M_\xi\}\notin\conv_{\beta_\xi}\). Since \(m\in M_\xi\) implies \(\varphi_\xi(m)\in J_\xi\), we have
\[
        \{\ell_m^\xi:m\in M_\xi\}\subseteq\{\ell_m^\xi:\varphi_\xi(m)\in J_\xi\}.
\]
The smaller set is outside \(\conv_{\beta_\xi}\), so by heredity of the ideal the larger set is also outside \(\conv_{\beta_\xi}\). This proves~(i).

For \(0<\xi<\kappa\), fix a surjection \(\rho_\xi:\omega\to\xi\). To define \(h_\xi\), let, for \(C\in\mathcal C_\xi\) and \(n<\omega\),
\[
        K(C,n)=
        \begin{cases}
        C\cap S_\xi, & \xi=0,\\
        (C\cap S_\xi)\setminus\bigcup_{k<n}J_{\rho_\xi(k)}, & \xi>0.
        \end{cases}
\]
This set is infinite. If \(\xi=0\), this follows directly from the choice of \(R_\xi^0\). If \(\xi>0\), then \(C\setminus\bigcup_{k<n}J_{\rho_\xi(k)}\) is itself a member of \(\mathcal C_\xi\), and by the choice of \(R_\xi^0\) it meets \(S_\xi\) in an infinite set. Enumerate the countable family of pairs as \(\{(C_t,n_t):t<\omega\}\) without repetitions. At stage \(s<\omega\), successively for \(t\le s\), choose
\[
        p_{s,t}\in K(C_t,n_t)\setminus\bigl(P_0\cup\cdots\cup P_{s-1}\cup\{p_{s,u}:u<t\}\bigr),
\]
where \(P_r=\{p_{r,t}:t\le r\}\); only finitely many points are excluded from the infinite set \(K(C_t,n_t)\), so the choice is possible. Put \(A(C_t,n_t)=\{p_{s,t}:s\ge t\}\); the sets \(A(C_t,n_t)\) are then pairwise disjoint and infinite. Define \(h_\xi(j)=n\) on each \(A(C,n)\), and \(h_\xi(j)=0\) on the remaining points of \(S_\xi\). Then \(C\cap S_\xi\cap h_\xi^{-1}(\{n\})\supseteq A(C,n)\) is infinite for every \(C\in\mathcal C_\xi\) and every \(n<\omega\). Moreover, if \(\xi>0\), then for every \(k<\omega\) and every \(n>k\), the set \(A(C,n)\) lies in \(K(C,n)\), which excludes \(J_{\rho_\xi(k)}\); hence
\[
        h_\xi[J_{\rho_\xi(k)}\cap S_\xi]\subseteq\{0,1,\ldots,k\}.
\]

It remains to check that the auxiliary invariant continues to hold for \(F',G'\subseteq\xi+1\) finite. Write \(F=F'\setminus\{\xi\}\), \(G=G'\setminus\{\xi\}\), and let \(C\in\mathcal C_\xi\) be the cell corresponding to \(F,G\) and the restriction of \(s'\) to \(F\). If \(\xi\notin F'\), then the new cell is \(C\) (if \(\xi\notin G'\)) or \(C\setminus J_\xi\) (if \(\xi\in G'\)); since \(J_\xi\subseteq R_\xi^0\) and \(C\setminus R_\xi^0\) is infinite, the new cell is infinite. If \(\xi\in F'\) with \(s'(\xi)=n\), then \(A(C,n)\) is contained in \(S_\xi\cap h_\xi^{-1}(\{n\})\cap C\), is infinite, and is disjoint from \(J_\xi\subseteq R_\xi\); so the new cell contains \(A(C,n)\) and is infinite.

We verify (iii) for an arbitrary distinct pair \(\eta,\xi<\kappa\). First suppose \(\eta<\xi\). For \(j\in J_\xi\cap S_\eta\), write \(j=\varphi_\xi(m)\) for some \(m\in M_\xi\); the definition of \(x_m\) gives \(q_{h_\eta(j)}^\eta=x_m(\eta)\). Therefore
\[
        \{q_n^\eta:n\in h_\eta[J_\xi\cap S_\eta]\}\subseteq\{x_m(\eta):m\in M_\xi\}\in\conv_{\beta_\eta},
\]
and the smaller set is in \(\conv_{\beta_\eta}\) by heredity. Now suppose \(\eta>\xi\). At stage \(\eta\), choose \(k<\omega\) with \(\rho_\eta(k)=\xi\). By construction \(h_\eta[J_\xi\cap S_\eta]\subseteq\{0,1,\ldots,k\}\); the corresponding \(q\)-image is finite, hence in \(\conv_{\beta_\eta}\). The reservoir-HMP independence condition (ii) is the special case \(G=\varnothing\) of the auxiliary invariant.
\end{proof}

\section{Proof of the non-reduction}

The proof of the non-reduction uses only maps from the enlarged dense set back to the original one. The first proposition shows that no generality is lost by considering retractions. This removes a minor nuisance in the diagonal argument, because a map on the new points can then be coded by a single element of \(\omega^\omega\).

The failure of a retraction can be detected at a single coordinate. It is enough to find one coordinate \(\beta_\xi\) and one set \(B(J_\xi)\in\Conv(A,D)\) whose inverse image has a non-small \(\beta_\xi\)-projection. The off-diagonal clauses from Proposition~\ref{prop:trace-controlled-selection} ensure that the same set \(B(J_\xi)\) remains small at every other coordinate.

\begin{proposition}
\label{prop:witness-to-retraction}
Let \(D\subseteq E\subseteq X_A\) be countable. If a map \(f\colon E\to D\) witnesses \(\Conv(A,D)\Kle\Conv(A,E)\), then the retraction \(r\colon E\to D\) defined by \(r|_D=\mathrm{id}_D\) and \(r(x)=f(x)\) for \(x\in E\setminus D\) also witnesses \(\Conv(A,D)\Kle\Conv(A,E)\).
\end{proposition}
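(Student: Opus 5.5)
Let \(B\in\Conv(A,D)\); the goal is \(r^{-1}[B]\in\Conv(A,E)\). Since \(r\) agrees with \(f\) off \(D\) and with the identity on \(D\), the preimage splits as
\[
        r^{-1}[B]=\bigl(f^{-1}[B]\setminus D\bigr)\cup(B\cap D)=\bigl(f^{-1}[B]\cap(E\setminus D)\bigr)\cup B .
\]
It therefore suffices to show that each piece lies in \(\Conv(A,E)\), because \(\Conv(A,E)\) is an ideal and so closed under finite unions.

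For the first piece, \(f\) witnesses \(\Conv(A,D)\Kle\Conv(A,E)\), so \(f^{-1}[B]\in\Conv(A,E)\). The set \(f^{-1}[B]\cap(E\setminus D)\) is a subset of it. By heredity of each \(\conv_\alpha\), its projection at every coordinate is small, so it lies in \(\Conv(A,E)\).

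For the second piece, \(B\subseteq D\subseteq E\) and \(\pi_\alpha[B]\in\conv_\alpha\) for every \(\alpha\in A\), because \(B\in\Conv(A,D)\). Membership in \(\Conv(A,E)\) depends only on the projections of the set, not on which dense set it is regarded inside. Hence \(B\in\Conv(A,E)\).

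Taking the union, \(\pi_\alpha[r^{-1}[B]]=\pi_\alpha[f^{-1}[B]\setminus D]\cup\pi_\alpha[B]\) is a union of two \(\conv_\alpha\)-small sets for each \(\alpha\), so it is small. Thus \(r^{-1}[B]\in\Conv(A,E)\), and \(r\) is a witness.

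There is no real obstacle here. The only point needing care is that \(B\cap D=B\), so the identity part of \(r\) contributes exactly \(B\). In particular we do not need to know anything about \(f\) on \(D\): the points \(x\in D\) with \(f(x)\in B\) are simply discarded when passing to \(r\).
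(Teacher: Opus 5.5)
Your proof is correct and is essentially the paper's argument: both note that \(B\in\Conv(A,E)\) (its projections are unchanged) and \(f^{-1}[B]\in\Conv(A,E)\), and then close under finite unions. The only difference is cosmetic: you compute \(r^{-1}[B]=(f^{-1}[B]\setminus D)\cup B\) exactly, whereas the paper just uses the inclusion \(r^{-1}[B]\subseteq B\cup f^{-1}[B]\).
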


\begin{proof}
Let \(B\in\Conv(A,D)\). Since \(D\subseteq E\), \(B\) is in particular a subset of \(E\) with the same coordinate projections, so \(B\in\Conv(A,E)\). Then \(f^{-1}[B]\in\Conv(A,E)\) because \(f\) is a witness for \(\Conv(A,D)\Kle\Conv(A,E)\). From the definition of \(r\),
\[
        r^{-1}[B]\subseteq B\cup f^{-1}[B],
\]
and the right-hand side belongs to \(\Conv(A,E)\) as the union of two members of that ideal.
\end{proof}

\begin{proposition}
\label{prop:cross-trace-criterion}
Assume that \(A=\{\beta_\xi:\xi<\omega_1\}\) is an enumeration without repetitions, and let the data \(J_\xi\), \(R_\xi\), \(S_\xi\), \(h_\xi\), \(\ell_m^\xi\) of Proposition~\ref{prop:trace-controlled-selection} be given. Define \(D=\{d_j:j<\omega\}\subseteq X_A\) by
\[
        d_j(\beta_\xi)=
        \begin{cases}
        0, & j\in R_\xi,\\
        q_{h_\xi(j)}^\xi, & j\in S_\xi.
        \end{cases}
\]
For a fixed \(\xi<\omega_1\), put \(B(J_\xi)=\{d_j:j\in J_\xi\}\). Then \(B(J_\xi)\in\Conv(A,D)\). Moreover, if \(E=\{e_m:m<\omega\}\subseteq X_A\) satisfies \(e_m(\beta_\xi)=\ell_m^\xi\), and if a retraction \(r\colon D\cup E\to D\) satisfies
\[
        r(e_m)=d_{\varphi_\xi(m)}\qquad(m<\omega),
\]
then \(r\) is not a \Katetov\ witness for \(\Conv(A,D)\Kle\Conv(A,D\cup E)\).
\end{proposition}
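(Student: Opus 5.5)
We must prove two things: first that \(B(J_\xi)\in\Conv(A,D)\), and second that any retraction \(r\) with \(r(e_m)=d_{\varphi_\xi(m)}\) fails to be a \Katetov\ witness. For the failure we use the same test set \(B(J_\xi)\) and show that \(\pi_{\beta_\xi}[r^{-1}[B(J_\xi)]]\notin\conv_{\beta_\xi}\).

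For membership we fix an arbitrary coordinate \(\beta_\eta\in A\) and compute
\[
        \pi_{\beta_\eta}[B(J_\xi)]=\{d_j(\beta_\eta):j\in J_\xi\}.
\]
This set lies inside \(\{0\}\cup\{q^\eta_n:n\in h_\eta[J_\xi\cap S_\eta]\}\), because indices \(j\in J_\xi\cap R_\eta\) contribute the single point \(0\), while indices \(j\in J_\xi\cap S_\eta\) contribute \(q^\eta_{h_\eta(j)}\).

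When \(\eta\neq\xi\), clause (iii) of Proposition~\ref{prop:trace-controlled-selection} says the second set is \(\conv_{\beta_\eta}\)-small. Adjoining the single point \(0\) keeps it small.

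When \(\eta=\xi\), the construction gives \(J_\xi\subseteq R_\xi\), so \(J_\xi\cap S_\xi=\varnothing\). Hence \(\pi_{\beta_\xi}[B(J_\xi)]\subseteq\{0\}\), which is finite. Heredity then gives \(\pi_{\beta_\eta}[B(J_\xi)]\in\conv_{\beta_\eta}\) for every \(\eta\), so \(B(J_\xi)\in\Conv(A,D)\).

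For the failure, we first show that for \(m<\omega\) with \(\varphi_\xi(m)\in J_\xi\), the point \(e_m\) lies in \(r^{-1}[B(J_\xi)]\). Indeed, \(r(e_m)=d_{\varphi_\xi(m)}\) and \(\varphi_\xi(m)\in J_\xi\), so \(r(e_m)\in B(J_\xi)\). Consequently
\[
        \pi_{\beta_\xi}[r^{-1}[B(J_\xi)]]\supseteq\{e_m(\beta_\xi):\varphi_\xi(m)\in J_\xi\}=\{\ell^\xi_m:\varphi_\xi(m)\in J_\xi\}.
\]
By clause (i) of Proposition~\ref{prop:trace-controlled-selection}, the right-hand side is not in \(\conv_{\beta_\xi}\). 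By heredity neither is the left-hand side, so \(r^{-1}[B(J_\xi)]\notin\Conv(A,D\cup E)\).

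One subtlety concerns points of \(E\) that also lie in \(D\). Since \(r\) is a retraction and satisfies \(r(e_m)=d_{\varphi_\xi(m)}\) by hypothesis, the inclusion \(e_m\in r^{-1}[B(J_\xi)]\) holds regardless of whether \(e_m\) lies in \(D\). So this case needs no separate treatment.

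Thus \(B(J_\xi)\in\Conv(A,D)\) while \(r^{-1}[B(J_\xi)]\notin\Conv(A,D\cup E)\), and \(r\) is not a \Katetov\ witness.

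The only step requiring care is the diagonal coordinate in the membership claim. There we rely on \(J_\xi\subseteq R_\xi\), which makes the \(\beta_\xi\)-projection collapse to \(\{0\}\); clause (iii) covers only the off-diagonal coordinates.
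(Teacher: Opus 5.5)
Your proof is correct and follows essentially the same route as the paper. At the diagonal coordinate the projection collapses to \(\{0\}\) because \(J_\xi\subseteq R_\xi\); at the off-diagonal coordinates clause (iii) plus the singleton \(\{0\}\) gives smallness; and \(r\) fails because the \(\beta_\xi\)-projection of \(r^{-1}[B(J_\xi)]\) contains \(\{\ell_m^\xi:\varphi_\xi(m)\in J_\xi\}\), which is non-small by clause (i).
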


\begin{proof}
We first show \(B(J_\xi)\in\Conv(A,D)\). At the diagonal coordinate \(\beta_\xi\), we have \(J_\xi\subseteq R_\xi\), so \(\pi_{\beta_\xi}[B(J_\xi)]\subseteq\{0\}\in\conv_{\beta_\xi}\). Let \(\eta\neq\xi\); each \(d_j\) with \(j\in J_\xi\) has \(\beta_\eta\)-coordinate \(0\) (if \(j\in R_\eta\)) or \(q_{h_\eta(j)}^\eta\) (if \(j\in S_\eta\)), so
\[
        \pi_{\beta_\eta}[B(J_\xi)]\subseteq\{0\}\cup\{q_n^\eta:n\in h_\eta[J_\xi\cap S_\eta]\}.
\]
The second set is in \(\conv_{\beta_\eta}\) by Proposition~\ref{prop:trace-controlled-selection}(iii), and adjoining the singleton \(\{0\}\) preserves ideal membership. Hence \(\pi_{\beta_\eta}[B(J_\xi)]\in\conv_{\beta_\eta}\) for every \(\eta\neq\xi\), so \(B(J_\xi)\in\Conv(A,D)\).

If \(\varphi_\xi(m)\in J_\xi\), then \(r(e_m)=d_{\varphi_\xi(m)}\in B(J_\xi)\), so \(e_m\in r^{-1}[B(J_\xi)]\), and \(e_m(\beta_\xi)=\ell_m^\xi\) appears in \(\pi_{\beta_\xi}[r^{-1}[B(J_\xi)]]\). Therefore
\[
        \pi_{\beta_\xi}[r^{-1}[B(J_\xi)]]\supseteq\{\ell_m^\xi:\varphi_\xi(m)\in J_\xi\},
\]
and the right-hand side is not in \(\conv_{\beta_\xi}\) by Proposition~\ref{prop:trace-controlled-selection}(i). By heredity, \(\pi_{\beta_\xi}[r^{-1}[B(J_\xi)]]\notin\conv_{\beta_\xi}\), so \(r^{-1}[B(J_\xi)]\notin\Conv(A,D\cup E)\). Since \(B(J_\xi)\in\Conv(A,D)\), the map \(r\) is not a \Katetov\ witness for \(\Conv(A,D)\Kle\Conv(A,D\cup E)\).
\end{proof}

\begin{proof}[Proof of Theorem~\ref{thm:main}]
By CH, enumerate \(\omega^\omega\) as \(\{\varphi_\xi:\xi<\omega_1\}\) and enumerate \(A\) as \(\{\beta_\xi:\xi<\omega_1\}\) without repetitions. For each \(\xi<\omega_1\), enumerate a countable dense subset
\[
        \{q_n^\xi:n<\omega\}\subseteq(\omega^{\beta_\xi}+1)\setminus L_{\beta_\xi};
\]
such a sequence exists because the complement of \(L_{\beta_\xi}\) is dense in \(\omega^{\beta_\xi}+1\) (Lemma~\ref{lem:L-beta}). Apply Proposition~\ref{prop:trace-controlled-selection} with \(\kappa=\omega_1\) to obtain the data \(J_\xi,R_\xi,S_\xi,h_\xi,\ell_m^\xi\). Define \(d_j\in X_A\) by
\[
        d_j(\beta_\xi)=
        \begin{cases}
        0, & j\in R_\xi,\\
        q_{h_\xi(j)}^\xi, & j\in S_\xi,
        \end{cases}
\]
and put \(D=\{d_j:j<\omega\}\).

\smallskip
\emph{Density of \(D\).} Let \(U\subseteq X_A\) be a non-empty basic open set, determined by a finite \(F\subseteq\omega_1\) and non-empty open sets \(U_\xi\subseteq\omega^{\beta_\xi}+1\) for \(\xi\in F\); the remaining coordinates of \(U\) range over the full coordinate spaces and impose no constraint. For each \(\xi\in F\), the set \(\{q_n^\xi:n<\omega\}\) is dense in \(\omega^{\beta_\xi}+1\), so we may choose \(n_\xi<\omega\) with \(q_{n_\xi}^\xi\in U_\xi\). By Proposition~\ref{prop:trace-controlled-selection}(ii), there is \(j\in\bigcap_{\xi\in F}S_\xi\) with \(h_\xi(j)=n_\xi\) for every \(\xi\in F\). Then \(d_j(\beta_\xi)=q_{n_\xi}^\xi\in U_\xi\) for every \(\xi\in F\), so \(d_j\in U\). Hence \(D\) meets every non-empty basic open subset of \(X_A\).

\smallskip
\emph{The enlarged dense set.} For \(m<\omega\) define \(e_m\in X_A\) by
\[
        e_m(\beta_\xi)=\ell_m^\xi\qquad(\xi<\omega_1),
\]
and put \(E=\{e_m:m<\omega\}\) and \(D^*=D\cup E\). At every coordinate \(\beta_\xi\), every \(d_j\) lies outside \(L_{\beta_\xi}\): the value \(0\) is not of the form \(\omega^2\cdot n+\omega\cdot(k+1)\) with \(n\ge 1\), and each \(q_n^\xi\) lies outside \(L_{\beta_\xi}\) by construction. On the other hand, \(e_m(\beta_\xi)=\ell_m^\xi\in L_{\beta_\xi}\) for every \(\xi\). Hence \(D\cap E=\varnothing\), and \(D^*\) is countable and dense (since it contains \(D\)).

\smallskip
\emph{Easy direction.} The inclusion \(i\colon D\hookrightarrow D^*\) witnesses \(\Conv(A,D^*)\Kle\Conv(A,D)\): for \(B\in\Conv(A,D^*)\), the set \(i^{-1}[B]=B\cap D\) projects coordinatewise to subsets of the corresponding \(\pi_\alpha[B]\in\conv_\alpha\), so \(B\cap D\in\Conv(A,D)\).

\smallskip
\emph{Hard direction.} Assume \(\Conv(A,D)\Kle\Conv(A,D^*)\); we derive a contradiction. By Proposition~\ref{prop:witness-to-retraction}, this reduction is witnessed by a retraction \(r\colon D^*\to D\). For each \(m<\omega\), the point \(r(e_m)\) belongs to \(D\), and we fix any \(\psi(m)<\omega\) with \(r(e_m)=d_{\psi(m)}\). Any such choice gives an element \(\psi\in\omega^\omega\), so \(\psi=\varphi_\xi\) for some \(\xi<\omega_1\). Proposition~\ref{prop:cross-trace-criterion} then gives a set \(B(J_\xi)\in\Conv(A,D)\) with
\[
        r^{-1}[B(J_\xi)]\notin\Conv(A,D^*),
\]
contradicting the assumption that \(r\) is a \Katetov\ witness, and therefore \(\Conv(A,D)\not\Kle\Conv(A,D^*)\).
\end{proof}

\section{Open questions}

The CH assumption enters the construction through the enumeration of \(\omega^\omega\) by \(\omega_1\). Since the available coordinates are indexed by a subset of \([3,\omega_1)\), the diagonal argument assigns one coordinate to each possible code only under CH. A ZFC proof would require a different way to handle all possible witnesses.

The ZFC absorption result of Section~\ref{sec:zfc-stability} rules out dense-set dependence examples in which the two directional non-small-coordinate sets are both countable. Under CH, Theorem~\ref{thm:main} gives an example in which the obstruction appears on uncountably many coordinates. The following two problems remain open.

\begin{question}
Does the conclusion of Theorem~\ref{thm:main} hold in ZFC?
\end{question}

\begin{question}\label{q:GLB}
Does the family \(\{\conv_\alpha:2\le\alpha<\omega_1\}\) have a greatest lower bound in the \Katetov\ order?
\end{question}

The two questions are related but not identical. Theorem~\ref{thm:main} rules out one simple way of obtaining a canonical uncountable-coordinate candidate from countable dense subsets of \(X_A\). It does not exclude the possibility that a different ideal, not tied to a particular dense set in such a product, is the greatest lower bound of the family \(\{\conv_\alpha:2\le\alpha<\omega_1\}\).

\end{document}